\theoremstyle{plain}
\newtheorem{thm}{Theorem}
 \newtheorem{prop}{Proposition}
\newtheorem{lem}{Lemma}
\newtheorem{cor}{Corollary}
\numberwithin{equation}{section}
\begin{document}
\large
\title[Heat flow method to Lichnerowicz type equation  ]
{Heat flow method to Lichnerowicz type equation on closed manifolds}
\author{Li Ma, Yuhua Sun}

\address{Ma: Department of Mathematical Sciences \\
Tsinghua University \\
Beijing 100084 \\
China}

\email{lma@math.tsinghua.edu.cn}

\dedicatory{}
\date{}
\thanks{The research is partially supported by the National Natural Science
Foundation of China 10631020 and SRFDP 20090002110019 }

\begin{abstract}
In this paper, we establish existence results for positive solutions
to the Lichnerowicz equation of the following type in closed
manifolds
\begin{align*}
-\Delta u=A(x)u^{-p}-B(x)u^{q},\quad in\quad M,
\end{align*}
where $p>1, q>0$, and $A(x)>0$, $B(x)\geq0$ are given smooth
functions. Our analysis is based on the global existence of positive
solutions to the following heat equation
\begin{align*}
\left\{\begin{array}{ll}
u_t-\Delta u=A(x)u^{-p}-B(x)u^{q},\quad in\quad M\times\mathbb{R}^{+},\\
u(x,0)=u_0,\quad in\quad M
\end{array}
\right.
\end{align*}
with the positive smooth initial data $u_0$.

 {\textbf{Mathematics Subject Classification} (2000): 35J60,
53C21, 58J05}

{ \textbf{Keywords}: heat flow method, Lichnerowicz equation,
positive solution}
\end{abstract}

\maketitle

\section{Introduction}

The aim of this paper is to give some existence results for positive
solutions to the Lichnerowicz type equations in a compact Riemannian
manifold $(M,g)$. In the interesting paper \cite{Hebey08}, the
authors have proved an existence result for Lichnerowicz equation on
closed manifolds by the mini-max method. In \cite{MX}, Li Ma and
Xingwang Xu have investigated the
 negative-positive mixed
index case in elliptic equations, as $\Delta u+f(u)=0$, where
$f(u)=u^{-p-1}-u^{p-1}$, they gained a uniform bound and a
non-existence result for positive solutions to the Lichnerowicz
equation in $\mathbb{R}^n$. Some further interesting results on the
Lichnerowicz equation on a Riemannian manifold $(M,g)$ of dimension
$n\geq 3$ have been obtained in \cite{CIP06}, \cite{CIP}, and
\cite{Hebey08}. Motivated by these papers, we mainly consider heat
flow methods for the Lichnerowcz type equations in the
negative-positive mixed index cases.

 Here we recall the Lichnerowicz equation
on manifolds. Given a smooth symmetric 2-tensor $\sigma$, a smooth
vector field $W$, and a triple data $(\pi, \tau,\varphi)$ of smooth
functions on $M$. Set
$$
c_n=\frac{n-2}{4(n-1)}, \; \; p=\frac{2n}{n-2},
$$
and let
$$
R_{\gamma,\varphi}=c_n(R(\gamma)-|\nabla \varphi|^2_{\gamma}), \; \;
A_{\gamma,W,\pi}=c_n(|\sigma+DW|_{\gamma}^2+\pi^2)
$$
and
$$
B_{\tau,\varphi}=c_n(\frac{n-1}{n}\tau^2-V(\varphi))
$$
where $V:\mathbf{R}\to\mathbf{R}$ is a given smooth function and
$R(\gamma)$ is the scalar curvature function of $\gamma$. Then the
Lichnerowicz equation for the Einstein-scalar conformal data
$(\gamma,\sigma,\pi,\tau,\varphi)$ with the given vector field $W$
is
\begin{equation}\label{lich0}
\Delta_{\gamma}
u-R_{\gamma,\varphi}u+A_{\gamma,W,\pi}u^{-p-1}-B_{\tau,\varphi}u^{p-1}=0,
\;  u>0,
\end{equation}
where $\Delta_{\gamma}$ is the Laplacian operator of $\gamma$.
 As in
\cite{MX}, we use the convention that $\Delta_{\gamma} u=u''$ on the
real line $\mathbf{R}$. Note that $A_{\gamma,W,\pi}\geq 0$.

 In this paper, we mainly deal with the heat equation of the following type
\begin{align}\label{eq3}
\left\{\begin{array}{ll} u_t-\Delta u
=A(x)u^{-p}-B(x)u^{q},\quad in\quad M\times \mathbb{R}^{+},\\
u(x,0)=u_{0},\quad in\quad M,
\end{array}
\right.
\end{align}
where $p>1,q>0$; $A(x)>0$ and $B(x)$  are both smooth functions in
M.

Applying sub-sup solution method on manifolds, we can prove
\begin{thm}\label{thm2}
If $B(x)>0$, and the initial data $u_0$ is a smooth function with
$u_0>0$, then there exists a unique smooth solution to the problem
(\ref{eq3}). As $t\to \infty$, we can find a subsequence $(t_j)$
such that
$$
u(t_j)\to u_{\infty}, \ \ in \ \ H^1(M)
$$
where $u_{\infty}$ is the positive solution to
$$
-\Delta u =A(x)u^{-p}-B(x)u^{q},\quad in\quad M
$$
\end{thm}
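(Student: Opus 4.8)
The plan is to treat \eqref{eq3} as the $L^2$ gradient flow of an energy functional whose critical points are precisely the solutions of the elliptic problem, and to tame the singular term $A(x)u^{-p}$ by confining the flow to a region bounded away from zero by means of constant barriers. First I would construct time-independent sub- and supersolutions. Writing $A_m=\min_M A>0$, $A_M=\max_M A$, $B_m=\min_M B>0$, $B_M=\max_M B$, the constant $\underline{u}\equiv\delta$ is a subsolution provided $A_m\delta^{-p}\geq B_M\delta^{q}$, i.e. $\delta^{p+q}\leq A_m/B_M$, while the constant $\overline{u}\equiv C$ is a supersolution provided $A_M C^{-p}\leq B_m C^{q}$, i.e. $C^{p+q}\geq A_M/B_m$. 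Shrinking $\delta$ and enlarging $C$ if necessary, I may also arrange $\delta\leq u_0\leq C$ on $M$, which is possible since $u_0$ is smooth and strictly positive on the compact manifold $M$.

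Next I would invoke standard local parabolic theory: as $u_0$ is smooth and positive and the nonlinearity $f(x,u)=A(x)u^{-p}-B(x)u^{q}$ is smooth in $u$ on $(0,\infty)$, \eqref{eq3} admits a unique smooth solution on a maximal time interval. The parabolic comparison principle applied against the barriers then yields the two-sided bound $\delta\leq u(x,t)\leq C$ for all $x$ and all $t$ in the interval of existence. This is the crucial point: the lower bound confines the solution to a region where $f$ is smooth and Lipschitz in $u$, so the singular term cannot blow up; combined with interior parabolic ($L^p$ and Schauder) estimates this rules out finite-time blow-up and extends the solution to all of $M\times\mathbb{R}^{+}$. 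Uniqueness follows from the comparison principle on the range $[\delta,C]$.

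For the long-time behavior I would use the energy
\begin{equation*}
E(u)=\int_M\Big(\tfrac12|\nabla u|^2-\tfrac{A(x)}{1-p}u^{1-p}+\tfrac{B(x)}{q+1}u^{q+1}\Big)\,dv_g,
\end{equation*}
which is well defined and bounded along the flow because $\delta\leq u\leq C$ and $p>1$. A direct computation gives $\frac{d}{dt}E(u(t))=-\int_M u_t^2\,dv_g\leq 0$, so $E(u(t))$ is nonincreasing and bounded below, whence $\int_0^\infty\int_M u_t^2\,dv_g\,dt<\infty$. Consequently $\int_{t}^{t+1}\int_M u_t^2\,dv_g\,dt\to 0$, and I can choose $t_j\to\infty$ with $\int_{t_j}^{t_j+1}\int_M u_t^2\to 0$. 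Setting $v_j(x,s)=u(x,t_j+s)$ for $s\in[0,1]$, the uniform two-sided bound and parabolic regularity provide uniform $C^{2,1}$ estimates, so along a further subsequence $v_j\to v_\infty$ in $C^{2,1}(M\times[0,1])$; the vanishing $L^2$ bound forces $\partial_s v_\infty\equiv 0$, so $v_\infty=u_\infty(x)$ is $s$-independent and, passing to the limit in the equation, satisfies $-\Delta u_\infty=A(x)u_\infty^{-p}-B(x)u_\infty^{q}$ with $\delta\leq u_\infty\leq C$, in particular $u_\infty>0$. Since $u(t_j)=v_j(\cdot,0)\to u_\infty$ in $C^2$, convergence holds in $H^1(M)$, as claimed.

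The step I expect to be the main obstacle is the a priori lower bound: everything downstream — global existence, boundedness of the energy, well-definedness of the $u^{1-p}$ term, and the limit equation — hinges on the solution staying uniformly away from $0$, and this must be secured by the comparison argument before any regularity or energy estimate can be deployed. By contrast the convergence step is softer, needing only the Lyapunov monotonicity together with standard parabolic compactness.
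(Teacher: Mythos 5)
Your proposal is correct and follows essentially the same route as the paper: constant sub- and supersolutions built from $\min_M A/B$ and $\max_M A/B$ trap the flow away from $0$ and $\infty$, existence and uniqueness come from the sub-supersolution framework together with the maximum principle, and convergence as $t\to\infty$ follows from the decay of the Lyapunov energy, which gives $\int_0^\infty\int_M u_t^2\,dv_g<\infty$ and a subsequence $t_j$ along which the flow converges to a positive solution of the elliptic equation. The only differences are minor: the paper constructs the global solution via Sattinger-type monotone iteration (Lemma \ref{par-thm1}) rather than local existence plus comparison and continuation, and your $C^{2,1}$-compactness argument for the time-slices is somewhat more detailed than the paper's sketch, which simply defers to the energy estimates in the proof of Theorem \ref{thm3}.
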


If $B(x)\geq0$, and $B(x)$ does have zero points in $M$, then
 we have the following result
\begin{thm}\label{thm3}
Assume $p>1, 1<q\leq \frac{n+2}{n-2}$. If $B(x)\geq0$, $\int_M
B(x)^{-\frac{1}{q}}dx<+\infty$, and the initial data $u_0$ is a
positive smooth function in $M$, then there exists a unique smooth
solution $u(x,t)$ to problem (\ref{eq3}). In addition, one can take
a sequence $t_i\to\infty$ such that $u(x,t_i)\rightharpoonup
u_{\infty}$ in $H^1(M)$, and $u_{\infty}\in C^2$ solves the
Lichnerowicz type equation
\begin{align}\label{lich2}
-\Delta u_{\infty}=A(x)u_{\infty}^{-p}-B(x)u_{\infty}^{q},\quad
in\quad M.
\end{align}
\end{thm}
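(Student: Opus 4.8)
The plan is to regard the flow (\ref{eq3}) as the $L^2$-gradient flow of the energy
\[
E(u)=\frac12\int_M|\nabla u|^2\,dx+\frac{1}{p-1}\int_M A(x)u^{-(p-1)}\,dx+\frac{1}{q+1}\int_M B(x)u^{q+1}\,dx,
\]
so that along any smooth positive solution $\frac{d}{dt}E(u(t))=-\int_M u_t^2\,dx\le 0$. Since $A>0$, $B\ge0$ and $u>0$ make all three integrands nonnegative, $E$ is nonnegative and nonincreasing, and this immediately yields the three uniform bounds $\int_M|\nabla u(t)|^2\le 2E(u_0)$, $\int_M A u(t)^{-(p-1)}\le (p-1)E(u_0)$, $\int_M B u(t)^{q+1}\le (q+1)E(u_0)$, together with the dissipation identity $\int_0^\infty\int_M u_t^2\,dx\,dt\le E(u_0)<\infty$. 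I would begin with standard parabolic theory to obtain a unique smooth positive solution on a maximal interval $[0,T)$.

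I would then prove global existence ($T=\infty$) by trapping $u$ between a sub- and a supersolution. Since $A>0$ and $B$ is bounded on $M$, one can choose a constant $m>0$ small enough that $m\le\min_M u_0$ and $Am^{-p}-Bm^{q}\ge0$ on $M$; then $m$ is a stationary subsolution and the comparison principle gives $u(x,t)\ge m$. Because $-Bu^{q}\le0$ and $u\ge m$, the right-hand side obeys $A u^{-p}-Bu^{q}\le (\max_M A)\,m^{-p}=:C_1$, so the spatially constant function $\max_M u_0+C_1 t$ is a supersolution and $u(x,t)\le\max_M u_0+C_1 t$. On every finite interval $u$ thus remains in a compact subset of $(0,\infty)$ on which the nonlinearity is smooth and Lipschitz, so the local solution cannot cease to exist and extends to all $t>0$.

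The crucial and most delicate step is the uniform-in-time $H^1$ bound, and this is exactly where the hypothesis $\int_M B^{-1/q}<\infty$ must be used to compensate for the loss of pointwise upper control once $B$ is allowed to vanish (so the constant supersolution of Theorem \ref{thm2} is no longer available). Writing $u=(Bu^{q+1})^{1/(q+1)}B^{-1/(q+1)}$ and applying Hölder with exponents $q+1$ and $(q+1)/q$ gives
\[
\int_M u(t)\,dx\le\Big(\int_M Bu(t)^{q+1}\,dx\Big)^{1/(q+1)}\Big(\int_M B^{-1/q}\,dx\Big)^{q/(q+1)}\le C,
\]
a uniform $L^1$ bound. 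Together with $\|\nabla u(t)\|_2\le C$ and the Poincaré inequality on the closed manifold $M$, this gives $\|u(t)\|_{H^1}\le C$ uniformly in $t$, and the assumption $q\le\frac{n+2}{n-2}$ then keeps $\|u(t)\|_{L^{q+1}}$ controlled through the Sobolev embedding $H^1(M)\hookrightarrow L^{2n/(n-2)}(M)$.

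Finally I would extract the limit. From $\int_0^\infty\|u_t\|_2^2\,dt<\infty$ there is a sequence $t_i\to\infty$ with $\|u_t(t_i)\|_2\to0$; along a subsequence the $H^1$-bounded family $u(t_i)$ converges weakly in $H^1$ to some $u_\infty$, strongly in $L^2$, and almost everywhere, so that $u_\infty\ge m>0$ a.e. Passing to the limit in the weak formulation $\int_M u_t(t_i)\varphi+\int_M\nabla u(t_i)\cdot\nabla\varphi=\int_M A u(t_i)^{-p}\varphi-\int_M B u(t_i)^{q}\varphi$, the first term vanishes, the gradient term converges by weak $H^1$ convergence, the singular term converges by dominated convergence (using $u(t_i)^{-p}\le m^{-p}$), and $\int_M Bu(t_i)^{q}\varphi$ converges by the Vitali theorem, whose uniform integrability hypothesis follows from $\int_M Bu^{q+1}\le C$ via Hölder. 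Hence $u_\infty$ is a weak solution of (\ref{lich2}). Regularity is then clean: since $u_\infty\ge m$ and $B\ge0$, one has $-\Delta u_\infty\le A u_\infty^{-p}\le C_1$ weakly, so De Giorgi--Nash--Moser gives $u_\infty\in L^\infty$, after which elliptic $L^p$ estimates and Schauder bootstrap yield $u_\infty\in C^2$. I expect the main obstacle to be precisely the uniform $H^1$ estimate, i.e. the correct use of $\int_M B^{-1/q}<\infty$ to recover compactness as $B$ degenerates, with the critical exponent $q=\frac{n+2}{n-2}$ the secondary delicate point in passing to the limit.
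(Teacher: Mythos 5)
Your proposal is correct, and its decisive compactness step coincides exactly with the paper's: the energy identity obtained by multiplying the flow by $\partial_t u$ (the paper's (\ref{int1})), the H\"older splitting $u=(Bu^{q+1})^{1/(q+1)}B^{-1/(q+1)}$ played against the hypothesis $\int_M B^{-1/q}\,dx<\infty$ (the paper's (\ref{average})), and Poincar\'e to convert the mean-value bound into the uniform $H^1$ estimate (\ref{convergence1})---followed by the same extraction of $t_i\to\infty$ with $\|\partial_t u(t_i)\|_{L^2}\to 0$. Where you genuinely diverge is in how the global solution of (\ref{eq3}) is produced. The paper treats the vanishing of $B$ as fatal to its barriers (its constant supersolution $S(\max A/B)^{1/(p+q)}$ from Theorem \ref{thm2} degenerates at zeros of $B$), so it regularizes to $B+\epsilon$ in (\ref{app1}), solves each approximate flow by the sub-sup iteration, proves the $\epsilon$-uniform lower bound $u_\epsilon\geq C$ and $\epsilon$-uniform energy bounds, and passes $\epsilon_j\to 0$ before sending $t\to\infty$. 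You instead notice that a small constant $m$ with $m^{p+q}\leq \min_M A/\max_M B$ and $m\leq\min_M u_0$ remains a subsolution even where $B=0$, and that the time-dependent barrier $\max_M u_0+C_1t$ with $C_1=(\max_M A)m^{-p}$ is a supersolution, so the flow for (\ref{eq3}) itself exists globally with no regularization; this is simpler and in fact tightens a sketchy point in the paper, where the convergence of the nonlinear terms as $\epsilon_j\to 0$ is merely asserted (``Clearly $u$ satisfies\dots''). Your endgame is likewise more careful than the paper's appeal to ``standard regularity theory'': dominated convergence for $Au(t_i)^{-p}$ via the lower barrier $m$, Vitali for $Bu(t_i)^{q}$ with uniform integrability supplied by $\int_M Bu^{q+1}\leq C$, and the weak inequality $-\Delta u_\infty\leq (\max_M A)m^{-p}$ to start the De Giorgi--Nash--Moser and Schauder bootstrap. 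The trade-offs: the paper's route yields a uniform-in-time sup bound for each $u_\epsilon$ by construction, while yours bounds $u$ only linearly in $t$---harmless, since all asymptotic compactness comes from the energy; and, as you implicitly noticed, the hypothesis $q\leq\frac{n+2}{n-2}$ plays no visible role in either argument.
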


We gain the existence results by the heat flow method. We point
out that in E.Hebey's recent work (see Appendix), he has explored
the Lichnerowicz equation using sub-sup solution method, but this
method can't go through in Theorem \ref{thm3}. For some special
cases we can give more detailed analysis of the heat equations.

This paper is organized as follows. In section \ref{sect2} we prove
Theorems \ref{thm2} and \ref{thm3}. In section \ref{sect3}, some
precise results are presented in special cases.

\section{Heat flows methods to the Lichnerowicz type equation and proofs
}\label{sect2}

This section is mainly devoted to the proofs of Theorems \ref{thm2}
and \ref{thm3}. To complete the proofs, we should make some
preparations by establishing sub-sup solution method on manifolds.
We take the same argument as in D.H.Sattinger's paper \cite{SAT},
but the difference is that D.H.Sattinger has dealt with sub-sup
solutions only in Euclidean spaces, and for convenience of readers,
we present his some results on manifolds. We point out that the
techniques in the proofs are almost the same as in D.H.Sattinger's.

Consider the initial problem to the following heat equation in
closed manifold
\begin{align}\label{sub-sup equation}
\left\{\begin{array}{ll} u_t-\Delta u=f(x,u),\quad in\quad M\times
\mathbb{R}^{+},\\
u(x,0)=u_0,\quad in\quad M.
\end{array}
\right.
\end{align}
where $f(x,u)=A(x)u^{-p}-B(x)u^{q}$, and $A(x),B(x), u_0>0$ are
smooth functions.

We call the function $\varphi_0(x,t)>0$ a sup-solution to
(\ref{sub-sup equation}) if
\begin{align*}
\left\{\begin{array}{ll} \partial_t\varphi_0-\Delta
\varphi_0-f(x,\varphi_0)\geq0,\quad in\quad M\times
\mathbb{R}^{+},\\
\varphi_0(x,0)\geq u_0,\quad in\quad M.
\end{array}
\right.
\end{align*}
Similarly the sub-solution $\psi_0(x,t)>0$ is defined by if
\begin{align}\label{sub1}
\left\{\begin{array}{ll} \partial_t\psi_0-\Delta
\psi_0-f(x,\psi_0)\leq0,\quad in\quad M\times
\mathbb{R}^{+},\\
\psi_0(x,0)\leq u_0,\quad in\quad M.
\end{array}
\right.
\end{align}

Next, we define the mapping $\varphi_1=\Lambda \varphi_0$ by
\begin{align*}
\left\{\begin{array}{ll}
\partial_t\varphi_1-\Delta\varphi_1+\Omega\varphi_1
=f(x,\varphi_0)+\Omega\varphi_0,\quad in\quad M\times
\mathbb{R}^{+},\\
\varphi_1(x,0)=u_0,\quad in\quad M.
\end{array}
\right.
\end{align*}
where $\Omega$ is constant chosen large enough.

\begin{lem}\label{par-thm1}
Given a positive sup-solution $\varphi_0(x,t)$ and a positive
sub-solution $\psi_0(x,t)$ to problem (\ref{sub-sup equation}) in
closed manifold M. Define sequences $\{\varphi_n\}$ and $\{\psi_n\}$
inductively by $\varphi_{n+1}=\Lambda\varphi_n$,
$\psi_{n+1}=\Lambda\psi_n$. If $\Omega$ is large enough so that
\begin{align*}
\frac{\partial f}{\partial u}(x,u)+\Omega>0\quad on\quad
\min_{M\times(0,T)}\psi_0<u<\max_{M\times(0,T)}\varphi_0,
\end{align*}
then the sequences $\{\varphi_n\}$ and $\{\psi_n\}$ are monotone
increasing and decreasing respectively. As $n\to\infty$, they both
tend to a unique fixed point $u=\Lambda u$, which is a smooth
solution of
\begin{align*}
\left\{\begin{array}{ll}
\partial_t u-\Delta u=f(x,u),\quad in\quad M\times(0,T)\\
u(x,0)=u_0,\quad in\quad M.
\end{array}
\right.
\end{align*}
\end{lem}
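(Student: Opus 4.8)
The plan is to run D.H.~Sattinger's monotone iteration scheme, with the parabolic comparison (weak maximum) principle on the closed manifold $M$ replacing the Euclidean one. First I would fix the range $I=[\min_{M\times(0,T)}\psi_0,\ \max_{M\times(0,T)}\varphi_0]$, which by continuity and compactness of $M\times[0,T]$ is a compact interval with $\min I>0$; on this range the singular term $u^{-p}$ and the smooth function $f$ have bounded first derivative, and by hypothesis $g(x,u):=f(x,u)+\Omega u$ is strictly increasing in $u$ for $u\in I$. Linear parabolic theory on the closed manifold (solvability of $v_t-\Delta v+\Omega v=h$ with $v(\cdot,0)=u_0$ for smooth $h$, via the heat semigroup or Schauder estimates) makes $\Lambda$ well defined and smoothing. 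The workhorse is: if $w_t-\Delta w+\Omega w\ge0$ on $M\times(0,T)$ and $w(\cdot,0)\ge0$, then $w\ge0$; on a manifold without boundary this follows by inspecting a would-be negative minimum over $M\times[0,T]$ attained at some $(x_0,t_0)$ with $t_0>0$, where $w_t\le0$ and $\Delta w\ge0$, yet $\Omega w<0$ since $\Omega>0$, a contradiction. From this I get that $\Lambda$ is order preserving: if $w_1\le w_2$ with $w_1,w_2$ valued in $I$, then $\Lambda w_2-\Lambda w_1$ solves a problem with right-hand side $g(x,w_2)-g(x,w_1)\ge0$ and zero initial data, hence is nonnegative.

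Next I would establish the three base inequalities. Subtracting the equation for $\varphi_1=\Lambda\varphi_0$ from the sup-solution inequality for $\varphi_0$, the difference $\varphi_0-\varphi_1$ satisfies $\partial_t(\cdot)-\Delta(\cdot)+\Omega(\cdot)\ge0$ with nonnegative initial data (since $\varphi_0(\cdot,0)\ge u_0=\varphi_1(\cdot,0)$), so $\varphi_1\le\varphi_0$; symmetrically $\psi_1\ge\psi_0$. For the cross inequality $\psi_0\le\varphi_0$ I would not add $\Omega$ but linearize directly: with $w=\psi_0-\varphi_0$ the two defining differential inequalities give $\partial_t w-\Delta w\le f(x,\psi_0)-f(x,\varphi_0)=c(x,t)\,w$, where $c=\frac{\partial f}{\partial u}(x,\xi)$ is bounded because $\xi\in I$; since $w(\cdot,0)\le0$, the maximum principle applied to $e^{-\lambda t}w$ with $\lambda>\sup c$ forces $w\le0$. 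With $\psi_0\le\varphi_0$ in hand, applying the order-preserving map $\Lambda$ repeatedly and combining with the base inequalities yields, by induction, the full chain
\[
\psi_0\le\psi_1\le\cdots\le\psi_n\le\cdots\le\varphi_n\le\cdots\le\varphi_1\le\varphi_0.
\]
In particular every iterate takes values in $I$, so the monotonicity of $g$ used at each step is legitimate and positivity (hence the meaning of $u^{-p}$) is preserved throughout.

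Finally I would pass to the limit. Monotonicity and the two-sided bound give pointwise limits $\underline u=\lim_n\psi_n$ and $\overline u=\lim_n\varphi_n$. Because the right-hand sides $g(x,\psi_n),g(x,\varphi_n)$ are uniformly bounded on $I$, interior parabolic $L^r$ and Schauder estimates bound $\{\psi_n\}$ and $\{\varphi_n\}$ in $C^{2,1}$ on compact subsets (and up to $t=0$, given the smooth data), so the convergence improves enough to pass to the limit in the equation defining $\Lambda$. Thus $\underline u$ and $\overline u$ are fixed points $u=\Lambda u$, i.e. smooth solutions of $u_t-\Delta u=f(x,u)$ with $u(\cdot,0)=u_0$, and a further bootstrap gives full smoothness. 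Uniqueness of the fixed point, and hence $\underline u=\overline u$, follows from the same linearization: the difference of any two solutions satisfies $\partial_t w-\Delta w=c(x,t)\,w$ with $w(\cdot,0)=0$ and bounded $c$, so $w\equiv0$ by the maximum principle (or by a Gronwall estimate).

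The step I expect to require the most care is the comparison/maximum principle together with the a priori control it rests on: one must keep the iterates bounded away from $0$ so that $u^{-p}$ and $\frac{\partial f}{\partial u}$ stay bounded and Lipschitz on the working range $I$, and one must secure enough parabolic regularity to upgrade the monotone pointwise convergence to a topology strong enough to pass the nonlinear term to the limit. On a closed manifold the absence of a spatial boundary makes the maximum principle itself clean, so the essential content is identical to Sattinger's Euclidean argument.
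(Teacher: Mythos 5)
Your proposal is correct and follows essentially the same route as the paper: Sattinger's monotone iteration $\varphi_{n+1}=\Lambda\varphi_n$, $\psi_{n+1}=\Lambda\psi_n$ made order-preserving by the choice of $\Omega$, comparison via the parabolic maximum principle on the closed manifold, monotone limits, and uniqueness by linearizing the difference of two solutions. The only differences are refinements in your favor --- you explicitly verify the base cross-inequality $\psi_0\le\varphi_0$ (which the paper's induction ``if $\psi_k\le\varphi_k$ then $\psi_{k+1}\le\varphi_{k+1}$'' tacitly assumes) and you upgrade the monotone $L^2$ convergence via interior Schauder estimates before passing to the limit in the nonlinear term, a step the paper handles only by appeal to ``classical parabolic theory.''
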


\begin{proof}
We can take $\Omega>0$ large enough, so that
\begin{align}\label{monotone1}
\frac{\partial f}{\partial u}(x,u)+\Omega>0
\end{align}
for all $x\in M$, and $\min \psi_0\leq u\leq \max \varphi_0$.

Recall that the sequence $\{\psi_k\}$ is defined as follows
\begin{align}\label{iteration1}
\left\{\begin{array}{ll} \partial_t\psi_{k+1}-\Delta
\psi_{k+1}+\Omega\psi_{k+1}=f(x,\psi_k)+\Omega\psi_k,\quad in\quad
M\times
\mathbb{R}^{+},\\
\psi_{k+1}(x,0)=u_0,\quad in\quad M.
\end{array}
\right.
\end{align}
and the sequence $\{\varphi_k\}$ is similarly defined.

We claim
\begin{align}\label{sequence}
\psi_0\leq\psi_1\cdots\leq\psi_k\leq\cdots\leq\varphi_k\cdots\leq\varphi_1\leq\varphi_0.
\end{align}

To confirm this, first note for $k=0$ that
\begin{align}\label{sub2}
\left\{\begin{array}{ll} \partial_t\psi_1-\Delta
\psi_1+\Omega\psi_1=f(x,\psi_0)+\Omega\psi_0,\quad in\quad M\times
\mathbb{R}^{+},\\
\psi_1(x,0)=u_0,\quad in\quad M.
\end{array}
\right.
\end{align}
Subtracting (\ref{sub2}) from (\ref{sub1}), we find
\begin{align*}
\left\{\begin{array}{ll}
\partial_t(\psi_1-\psi_0)-\Delta(\psi_1-\psi_0)+\Omega(\psi_1-\psi_0)\geq0,\quad in\quad M\times
\mathbb{R}^{+},\\
(\psi_1-\psi_0)(x,0)\geq0,\quad in\quad M.
\end{array}
\right.
\end{align*}
 Applying the maximum principle, we know
\begin{align*}
\psi_0\leq\psi_1,\quad in \quad M\times \mathbb{R}^{+}.
\end{align*}

We now assume inductively
\begin{align}\label{monotone2}
\psi_{k-1}\leq\psi_{k},\quad in \quad M\times \mathbb{R}^{+}.
\end{align}
By (\ref{iteration1}) we  have
\begin{align}\label{iteration2}
\left\{\begin{array}{ll}
\partial_t(\psi_{k+1}-\psi_k)-\Delta(\psi_{k+1}-\psi_k)
+\Omega(\psi_{k+1}-\psi_k)\\\quad
=f(x,\psi_{k})-f(x,\psi_{k-1})+\Omega(\psi_{k}-\psi_{k-1}),\quad
in\quad M\times
\mathbb{R}^{+},\\
(\psi_{k+1}-\psi_k)(x,0)=0,\quad in\quad M.
\end{array}
\right.
\end{align}
From (\ref{monotone1}), (\ref{monotone2}), using the maximum
principle, we have
\begin{align*}
\psi_k\leq\psi_{k+1},\quad in\quad M\times \mathbb{R}^{+}.
\end{align*}
Hence similar argument can be applied to $\{\varphi_k\}$, we can
deduce $\varphi_{k+1}\leq\varphi_k$.

In light of this, we claim if $\psi_k\leq\varphi_k$, then
$\psi_{k+1}\leq\varphi_{k+1}$. From (\ref{iteration1}), we have
\begin{align}\label{compare}
\left\{\begin{array}{ll}
\partial_t(\varphi_{k+1}-\psi_{k+1})-\Delta(\varphi_{k+1}-\psi_{k+1})
+\Omega(\varphi_{k+1}-\psi_{k+1})\\
=f(x,\varphi_{k})-f(x,\psi_{k})+\Omega(\varphi_{k}-\psi_{k}),\quad
in\quad M\times
\mathbb{R}^{+},\\
(\varphi_{k+1}-\psi_{k+1})(x,0)=0,\quad in\quad M.
\end{array}
\right.
\end{align}
Since
\begin{align}
f(x,\varphi_{k})-f(x,\psi_{k})+\Omega(\varphi_{k}-\psi_{k})
=(f_u(x,\eta)+\Omega)(\varphi_{k}-\psi_{k}),
\end{align}
where $\eta$ is between $\psi_k$ and $\varphi_k$, applying the
maximum principle, we deduce from (\ref{monotone1}), (\ref{compare})
and the assumption $\psi_k\leq\varphi_k$ that
\begin{align}\label{sequence1}
\psi_{k+1}\leq\varphi_{k+1}.
\end{align}
Then (\ref{sequence}) is proved.

Since $\psi_0(x,t), \varphi_0$ are globally bounded, we define
\begin{align*}
u(x,t):=\lim_{k\to\infty}\psi_k(x,t)\\
v(x,t):=\lim_{k\to\infty}\varphi_k(x,t).
\end{align*}
 By (\ref{sequence1}), obviously we have
\begin{align}\label{sequence2}
u(x,t)\leq v(x,t),\quad in\quad M\times \mathbb{R}^{+}.
\end{align}
Applying the monotone convergence theorem, we have
\begin{align}
\left\{\begin{array}{ll}
\psi_k(x,t)\to u(x,t),\quad in \quad L^2(M),\\
\varphi_k(x,t)\to v(x,t),\quad in \quad L^2(M).
\end{array}
\right.
\end{align}
By (\ref{monotone1}), we can deduce
\begin{align}
||f(x,\psi_k)||_{L^2(M)}\leq C(1+||\psi_k||_{L^2(M)}),
\end{align}
where $C$ depends only on $A(x), B(x)$, and $M$.

 Furthermore letting $k\to\infty$, we have
\begin{align}\label{sub-solution}
\left\{\begin{array}{ll} u_t-\Delta u=f(x,u),\quad in\quad M\times
\mathbb{R}^{+},\\
u(x,0)=u_0,\quad in\quad M,
\end{array}
\right.
\end{align}
and
\begin{align}\label{sup-solution}
\left\{\begin{array}{ll} v_t-\Delta v=f(x,v),\quad in\quad M\times
\mathbb{R}^{+},\\
v(x,0)=v_0,\quad in\quad M.
\end{array}
\right.
\end{align}
Subtract (\ref{sub-solution}) from (\ref{sup-solution}), we have
\begin{align}\label{unique}
\left\{\begin{array}{ll} (u-v)_t-\Delta
(u-v)+\Omega(u-v)=f(x,u)-f(x,v)+\Omega(u-v),\; in\; M\times
\mathbb{R}^{+},\\
(u-v)(x,0)=0,\quad in\quad M.
\end{array}
\right.
\end{align}
Observe that
\begin{align*}
f(x,u)-f(x,v)+\Omega(u-v)=(f_u(x,\xi)+\Omega)(u-v),
\end{align*}
where $\xi$ is between $u$ and $v$.

From (\ref{monotone1}), (\ref{unique}), (\ref{sequence2}) and using
the maximum principle, we deduce
\begin{align*}
u=v,\quad in\quad M\times \mathbb{R}^{+}.
\end{align*}
By classical parabolic theory, we know that $u$ is a positive smooth
solution.
\end{proof}

\textbf{Proof of Theorem \ref{thm2}:} Let
$\varphi_0=S(\max\frac{A(x)}{B(x)})^{\frac{1}{p+q}}$,
$\psi_0=s(\min\frac{A(x)}{B(x)})^{\frac{1}{p+q}}$, where $S\geq1,
0\leq s\leq1$ is determined by the following
\begin{align*}
S(\max\frac{A(x)}{B(x)})^{\frac{1}{p+q}}\geq \max u_0,\\
s(\min\frac{A(x)}{B(x)})^{\frac{1}{p+q}}\leq \min u_0.
\end{align*}
We know that $\psi_0$ and $\varphi_0$ constructed above are the
sub-solution and sup-solution of the problem (\ref{eq3}).  By Lemma
\ref{par-thm1}, we know that problem (\ref{eq3}) has a positive
smooth solution.

The uniqueness can be gained as below. If $u$, $v$ are both smooth
solutions to the problem (\ref{eq3}). Then it follows that
\begin{align*}
\left\{\begin{array}{ll}
(u-v)_t-\Delta (u-v)=A(x)u^{-p}-B(x)u^{q}-A(x)v^{-p}+B(x)v^{q},\;in\;M\times \mathbb{R}^{+}\\
(u-v)(x,0)=0,\quad in\quad M.
\end{array}
\right.
\end{align*}
We can just rewrite it as
\begin{align}
\left\{\begin{array}{ll}
(u-v)_t-\Delta (u-v)+(pA(x)\xi^{-p-1}+qB(x)\eta^{q-1})(u-v)=0,\;in\;M\times \mathbb{R}^{+}\\
(u-v)(x,0)=0,\quad in\quad M,
\end{array}
\right.
\end{align}
where $\xi$, $\eta$ are between $u$ and $v$.

Since $u$, $v$ are positive smooth solutions, using the maximum
principle, we can deduce
\begin{align*}
u=v,\quad in\quad M.
\end{align*}
Because of the uniform bound-ness of $u(x,t)$, the convergence at
$t=\infty$ of $u(x,t)$ is by now standard (see also the proof of
Theorem \ref{thm3} below), so we omit the detail. This completes
the proof of Theorem \ref{thm2}.

\textbf{Proof of Theorem \ref{thm3}:} If the term $B(x)$ in equation
(\ref{eq3}) has zero points, letting $Z=\{x\in M; B(x)=0\}$, the
sub-sup solution method above can not be used. Then we add an
positive epsilon to $B(x)$ such that we can use the method to get an
approximate solutions (\ref{eq3}). Let $B_1=\sup_{M}B(x)>0$ and
$B_2=\inf_{M}A(x)>0$.

First, for $\epsilon>0$, we consider the following equation
\begin{align}\label{app1}
\left\{\begin{array}{ll}
\partial_tu_{\epsilon}-\Delta
u_{\epsilon}=A(x)u_{\epsilon}^{-p}-(B(x)+\epsilon)u_{\epsilon}^{q},\quad in\quad M\times \mathbb{R}^{+}\\
u_{\epsilon}(x,0)=u_0,\quad in\; M.
\end{array}
\right.
\end{align}
By the sub-super solution method we know that there is a positive
smooth solution, which is denoted by $u_{\epsilon}$, to
(\ref{app1}). Sometimes we may write $u=u_{\epsilon}$.

We claim that there is a uniform constant $C:=C(u_0)>0$ such that
$$ u_{\epsilon}(x,t)\geq C, \ \ in \ \ M\times (0,\infty).
$$
In fact, for any fixed $T>0$, let
$$u(x_0,t_0):=u_{\epsilon}(x_0,t_0)=\inf_{M\times (0,T]} u(x,t).
$$
Then we have
$$
0\geq (\partial_t -\Delta)u_{\epsilon}(x_0,t_0)=
A(x_0)u_{\epsilon}(x_0,t_0)^{-p}-(B(x_0)+\epsilon)u_{\epsilon}(x_0,t_0)^{q}.
$$
Then we have
$$
A(x_0)u_{\epsilon}(x_0,t_0)^{-p}\leq
(B(x_0)+\epsilon)u_{\epsilon}(x_0,t_0)^{q}.
$$
Then
$$0<B_2\leq A(x_0)\leq (B(x_0)+\epsilon)u_{\epsilon}(x_0,t_0)^{q+p}
$$
$$
\leq (B_1+1)u_{\epsilon}(x_0,t_0)^{q+p},
$$
which implies that
$$
u_{\epsilon}(x_0,t_0)\geq C>0
$$
for some uniform constant $C=C(u_0)>0$. Then the claim is true and
this fact will be used implicitly.

Multiplying (\ref{app1}) with $\partial_t u_{\epsilon}$ and
integrating in $M\times (0,t)$ we have
\begin{eqnarray*}
\int_0^t\int_M|\partial_t u_{\epsilon}|^2dxdt-\int_0^t\int_M \Delta
u_{\epsilon}\partial_t u_{\epsilon}dxdt&=&\int_0^t\int_M
A(x)u_{\epsilon}^{-p}\partial_t
u_{\epsilon}dxdt\\
&-&\int_0^t\int_M(B(x)+\epsilon)u_{\epsilon}^{q}\partial_t
u_{\epsilon}dxdt.
\end{eqnarray*}

Rearranging, we deduce
\begin{eqnarray}\label{int1}
\int_0^t\int_M|\partial_t
u_{\epsilon}|^2dxdt+\frac{1}{2}\int_M|\nabla
u_{\epsilon}|^2dx+\frac{1}{p-1}\int_M
A(x)u_{\epsilon}^{-p+1}dx\nonumber\\
+\frac{1}{q+1}\int_M(B(x)+\epsilon)u_{\epsilon}^{q+1}dx=\frac{1}{2}\int_M|\nabla
u_{0}|^2dx+\frac{1}{p-1}\int_M A(x)u_{0}^{-p+1}dx\nonumber\\
+\frac{1}{q+1}\int_M(B(x)+\epsilon)u_{0}^{q+1}dx
\end{eqnarray}

By Poincare's inequality, we obtain
\begin{align*}
\int_M|u-\bar{u}|^2dx\leq C(M)\int_M|\nabla u|^2dx.
\end{align*}
Then it follows that
\begin{align}\label{poincare1}
\int_M u^2dx\leq |M|\bar{u}^2+C(M)\int_M|\nabla u|^2dx,
\end{align}
where $|M|$ is the volume of manifold $M$.

Note
\begin{eqnarray}\label{average}
\bar{u}&=&\frac{1}{|M|}\int_Mudx=\frac{1}{|M|}\int_MB(x)^{-\frac{1}{q+1}}B(x)^{\frac{1}{q+1}}udx\nonumber\\
&\leq&\frac{1}{|M|}(\int_M
B(x)^{-\frac{1}{q}}dx)^{\frac{q}{q+1}}(\int_MB(x)u^{q+1}dx)^{\frac{1}{q+1}}.
\end{eqnarray}
Thus, from (\ref{int1}), (\ref{poincare1}), (\ref{average}) and the
assumption $\int_M B(x)^{-\frac{1}{q}}dx\leq C(M)$, we deduce for
every $\epsilon$,
\begin{align}\label{convergence1}
\left\{\begin{array}{l}
\int_M u_{\epsilon}^2dx\leq C,\\
\int_M|\nabla u_{\epsilon}|^2dx\leq C,\\
\int_0^t\int_M |\partial_t u_{\epsilon}|^2dxdt\leq C,
\end{array}
\right.
\end{align}
where $C$ doesn't depend on $t, \epsilon$. It depends only on
$A(x)$, $B(x)$, $u_0$, and $M$.

Then we find a sequence $\epsilon_j$ with $\epsilon_j\to 0$, and the
positive function $u\in H^1(M)$, such that
\begin{align}\label{convergence2}
\left\{\begin{array}{ll}
u_{\epsilon_j}\rightharpoonup u,\quad in\quad H^1(M),\\
u_{\epsilon_j}\to u,\quad in\quad L^2(M),\\
\partial_tu_{\epsilon_j}\rightharpoonup \partial_t u,
\quad in\quad L^2(M\times\mathbb{R}^{+}).
\end{array}
\right.
\end{align}

Clearly $u$ satisfies
\begin{align*}
\partial_t u-\Delta
u=A(x)u^{-p}-B(x)u^q,\quad in\quad M\times \mathbb{R}^{+}
\end{align*}
with the initial data $u_0$ and the estimates (\ref{convergence1}).

Then we may take a subsequence $t_k\to\infty$ such that
\begin{align}
\partial_t u(x,t_k)\to 0,\quad in\quad L^2(M)
\end{align}
and $u(x,t_k)$ converges weakly to $\breve{{u}}$ in $H^1$,
$\breve{{u}}\in H^1(M)$ is a positive smooth solution to
\begin{align}\label{mali}
-\Delta u=A(x)u^{-p}-B(x)u^q,\quad in\quad M\times \mathbb{R}^{+}.
\end{align}
 Using the standard regularity theory we know that
 $\breve{{u}}$ is the positive smooth solution to (\ref{mali}).

This completes the proof of Theorem \ref{thm3}.

\section{Simple cases:global existence and asymptotic behavior}
\label{sect3}

In this section we shall give more precise information in some
simple cases.

Firstly consider the following problem
\begin{align}\label{eq1}
\left\{\begin{array}{ll} u_t-\Delta u=u^{-p}-u^{q},\quad in\quad M\times \mathbb{R}^{+},\\
u(x,0)=u_{0},\quad in\quad M,
\end{array}
\right.
\end{align}
where $p,q>1$ and $u_0>0$ is a given smooth function in $M$.

When the initial data is small, we have
\begin{prop}\label{pro1}
Let $u(x,t)$ be the solution to the problem (\ref{eq1}). If the
initial data $u_0$ is  such that $0<u_0\leq 1$, then $u(x,t)\to 1$
uniformly as $t\to\infty$ .
\end{prop}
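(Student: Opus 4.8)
The plan is to exploit the structure of the reaction term $f(u)=u^{-p}-u^{q}$, which vanishes at exactly one positive value, namely $u=1$ (since $u^{-p}=u^{q}$ forces $u^{p+q}=1$), is strictly positive on $(0,1)$ and strictly negative on $(1,\infty)$. Consequently the constant function $1$ is a stationary solution of (\ref{eq1}), and I expect $u\equiv 1$ to serve as an upper barrier, while a spatially homogeneous ODE flow starting from $\min_{M}u_0$ serves as a lower barrier that sweeps upward to $1$. The convergence will then follow by squeezing the solution between these two barriers.

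First I would establish the upper bound $u(x,t)\le 1$. Since the constant $1$ solves the equation and $u_0\le 1$ equals its initial value, comparison gives $u\le 1$. Concretely, writing $w=u-1$ one has $w_t-\Delta w=f(u)-f(1)=f'(\xi)\,w$ with $\xi$ between $u$ and $1$, and the maximum principle yields $w\le 0$. Equivalently, applying the evolution inequality to $M(t)=\max_{x}u(\cdot,t)$ gives $M'(t)\le f(M(t))$ with $M(0)\le 1$, so $M(t)\le 1$ for all $t$.

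Next, for the lower bound I would introduce the solution $\underline{u}(t)$ of the ODE $\underline{u}'=f(\underline{u})$, $\underline{u}(0)=\min_{M}u_0\in(0,1]$. Because $f>0$ on $(0,1)$ and $f(1)=0$, the flow $\underline{u}$ is nondecreasing, remains below $1$, and converges to the unique positive zero $1$ of $f$ as $t\to\infty$. As $\underline{u}$ is spatially constant it satisfies $\underline{u}_t-\Delta\underline{u}=f(\underline{u})$ exactly and has initial value $\le u_0$, so the same comparison argument gives $\underline{u}(t)\le u(x,t)$. Combining the two estimates yields $\underline{u}(t)\le u(x,t)\le 1$ for all $x$, hence $\sup_{M}|u(\cdot,t)-1|\le 1-\underline{u}(t)\to 0$, which is the asserted uniform convergence.

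The main technical obstacle is the singularity of $u^{-p}$ as $u\to 0$, which blocks a naive global application of the comparison principle. I would handle this by working on finite time slabs: the global positive smooth solution supplied by Theorem \ref{thm2} (with $A=B=1$) is, on each $M\times[0,T]$, bounded below by a positive constant and bounded above by compactness, so $f$ is Lipschitz on the relevant range and the maximum principle is legitimate there; the uniform-in-time bounds then follow by letting $T\to\infty$. I would also note that the barrier $\underline{u}$ stays bounded away from $0$, being increasing from $\min_{M}u_0>0$, so no circularity arises in justifying the comparison.
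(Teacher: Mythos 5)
Your proof is correct and takes essentially the same route as the paper: both arguments squeeze $u$ between the stationary value $1$ from above (preserved because $f(u)=u^{-p}-u^{q}$ is negative beyond $1$ and $u_0\le 1$) and a spatially constant lower bound that sweeps up to $1$ because $f>0$ on $(0,1)$. Your explicit ODE barrier $\underline{u}'=f(\underline{u})$ combined with parabolic comparison is just a cleaner packaging of the paper's differential inequalities for $u_{\max}$ and $u_{\min}$ (where the paper's exponential-growth step for $u_{\min}$ when $u_{\min}\le 1-\delta$ plays the role of your barrier converging to $1$), and your slab-wise treatment of the singularity of $u^{-p}$ via the positive lower bound on $M\times[0,T]$ makes explicit a point the paper leaves implicit.
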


For convenience we introduce some notations
$$u_{min}=\min_{M} u(x,t),\quad u_{max}=\max_{M}u(x,t),$$
if $A(x)$, $B(x)$, and $u_0$ are smooth, by the classical parabolic
theory we know the $\min,\max$ functions are well defined.

\begin{proof}
 Consider the evolutional trend of the
function $u$ and we naturally study the $\min_Mu,\max_Mu$. Recall
$u_0$ is a smooth function on M, by the classical parabolic theory,
we know that  $u$ is also smooth and the generalized derivatives of
$\min_Mu,\max_Mu$ with respect to $t$ is well-defined.

Since $0< u_0\leq1$, supposing $u$ first reaches $u(x,t)=1$ at
$(x_0,t_0)$, then from equation (\ref{eq1}), we have
$$u_{{\max}_t}(x_0,t_0)=\Delta u_{\max}(x_0,t_0)\leq 0,$$
then we deduce
\begin{align}\label{max1}
u\leq u_{\max}\leq 1,\quad t\in\mathbb{R}^{+},
\end{align}

Investigating
$$u_{{\min}_t}=\Delta u_{\min}+u^{-p}-u^{q}\geq 0,$$
there exists a constant $c_0$ such that
\begin{align}\label{min1}
u\geq u_{\min}\geq c_0>0,\quad t\in\mathbb{R}^{+}.
\end{align}
where $c_0$ can equal to $\min_M{u_0}$.

From (\ref{max1}) and (\ref{min1}), we know that the solution $u$ is
uniformly bounded, and the global existence result is obtained.

We claim that $u(x,t)\rightarrow 1$ uniformly as
$t\rightarrow\infty$. If $u_{min}\leq 1-\delta$ for some $\delta>0$,
we actually have
$$
u_{{min}_t}=C(\delta)u_{min}>0,
$$
which implies that
$$
u_{min}(t)\geq exp(C(\delta)t)u_{min}(0)\to \infty
$$
as $t\to\infty$. Hence we have $u\geq 1-\delta$ for $t\geq T$ for
some large $T>0$. Therefore we have uniform convergence property
that $u(x,t)\to 1$ as $t\to\infty$ uniformly.
\end{proof}

When the initial data $u_0$ is large, we have
\begin{prop}\label{pro2}
Let $u(x,t)$  be the solution to the problem (\ref{eq1}). If the
initial data $u_0$ is such that $0<u_0\leq L$, where $L>1$, then we
have that $u(x,t)\to 1$ uniformly in $M$ as $t\to\infty$.
\end{prop}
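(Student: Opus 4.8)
The plan is to mimic the proof of Proposition \ref{pro1}, tracking the two scalar functions $u_{\max}(t)=\max_{M}u(\cdot,t)$ and $u_{\min}(t)=\min_{M}u(\cdot,t)$, but now accounting for the fact that the initial maximum may exceed $1$. The central structural fact is that the reaction term $f(s)=s^{-p}-s^{q}$ satisfies $f(1)=0$, $f(s)>0$ for $0<s<1$, and $f(s)<0$ for $s>1$; thus $s=1$ is the globally attracting equilibrium of the scalar ODE $\dot y=f(y)$ on $(0,\infty)$.

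First I would establish uniform bounds and global existence. At a point where the spatial maximum is attained $\Delta u\le 0$, so along the flow $\frac{d}{dt}u_{\max}\le f(u_{\max})$ (in the sense of the upper Dini derivative). Since $f<0$ on $(1,\infty)$, the quantity $u_{\max}(t)$ is nonincreasing whenever it exceeds $1$; hence $u_{\max}(t)\le \max_{M} u_0\le L$ for all $t$. Symmetrically, at a spatial minimum $\Delta u\ge 0$, giving $\frac{d}{dt}u_{\min}\ge f(u_{\min})$, and because $f>0$ on $(0,1)$ with $f(s)\to+\infty$ as $s\to 0^{+}$, the minimum cannot cross below $c_0:=\min(\min_{M} u_0,\,1)>0$. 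Together with $c_0\le u\le L$, the classical parabolic theory yields global existence and uniform smoothness bounds.

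For the asymptotics I would invoke the ODE comparison principle. Let $y(t)$ and $z(t)$ solve $\dot y=f(y)$ and $\dot z=f(z)$ with $y(0)=\max_{M} u_0$ and $z(0)=\min_{M} u_0$. The differential inequalities above give $u_{\min}(t)\ge z(t)$ and $u_{\max}(t)\le y(t)$. Since $s=1$ is globally asymptotically stable for $\dot s=f(s)$, with monotone convergence to $1$ from either side (because $f$ keeps a constant sign on each of $(0,1)$ and $(1,\infty)$), both $y(t)\to 1$ and $z(t)\to 1$. Hence $\limsup_{t\to\infty}u_{\max}(t)\le 1$ and $\liminf_{t\to\infty}u_{\min}(t)\ge 1$; combined with the trivial bound $u_{\min}\le u_{\max}$ this forces $u_{\max}(t)\to 1$ and $u_{\min}(t)\to 1$. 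The sandwich $u_{\min}(t)\le u(x,t)\le u_{\max}(t)$ then delivers $u(x,t)\to 1$ uniformly on $M$.

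The main obstacle is the rigorous justification of the one-sided differential inequalities for the functions $u_{\max}$ and $u_{\min}$, which are only Lipschitz in $t$: one must argue with upper and lower Dini derivatives and an envelope argument, taking care that the extremal point moves with $t$. The sign analysis of $f$ and the ODE comparison are then routine; the one point worth checking is that the repulsion $f(s)\to+\infty$ as $s\to 0^{+}$ indeed prevents the minimum from degenerating, which is what guarantees that the lower bound $c_0$ is uniform in $t$ and hence that the convergence is genuinely uniform.
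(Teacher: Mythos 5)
Your proposal is correct and takes essentially the same route as the paper: both arguments control $u_{\min}(t)$ and $u_{\max}(t)$ via the maximum principle at spatial extrema and the sign of $f(s)=s^{-p}-s^{q}$ about the equilibrium $s=1$, then squeeze $u$ between quantities converging to $1$. Your ODE-comparison packaging (sandwiching between solutions of $\dot y=f(y)$, with the Dini-derivative caveat made explicit) is a cleaner and more careful rendering of the paper's version, which instead argues by contradiction that $u_{\min}$ would grow exponentially if it remained below $1-\delta$ and that $u_{\max}$ is nonincreasing while above $1$, but the underlying mechanism is identical.
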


\begin{proof} Firstly consider the functions $\min_Mu,\max_Mu$. If $u_{\min}\leq 1-\delta$ for
some $\delta>0$, we then have
$$
u_{{\min}_t}=C(\delta)u_{\min}>0,
$$
which implies that
$$
u_{\min}(t)\geq exp(C(\delta)t)u_{\min}(0)\to \infty
$$
as $t\to\infty$. Hence, we have $u\geq 1-\delta$ for $t\geq T$ for
some large $T>0$.

Consider the property of $u_{\max}>1$ with
$$u_{{\max}_t}\leq u_{\max}^{-p}-u_{\max}^{q}\leq 0.$$
Then
$$u_{\max}\leq L.$$
By the similar argument as before, we have $u\leq 1+\delta$ for
$t\geq T_1$ for some large $T_1>0$. Hence we have $u(x,t)\to 1$
uniformly as $t\to \infty$.
\end{proof}

We may also investigate the case where $A(x)=C_0B(x)>0$. We have a
similar result to Proposition \ref{pro1}, \ref{pro2}.
\begin{cor}\label{cor1}
Suppose $A(x)/B(x)\equiv C_0$, and the initial data $u_0$ is a
smooth function with $u_0>0$, then the problem (\ref{eq3}) has a
unique smooth solution u(x,t). Furthermore, $u(x,t)\rightarrow
C_0^{\frac{1}{(p+q)}}$ uniformly as $t\rightarrow\infty$.
\end{cor}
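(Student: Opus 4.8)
The plan is to split the statement into an existence/uniqueness part and an asymptotics part. Since $A(x)=C_0B(x)$ with $A(x)>0$, we necessarily have $C_0>0$ and $B(x)=A(x)/C_0>0$ throughout $M$; hence the hypotheses of Theorem~\ref{thm2} are met, and the existence and uniqueness of a smooth positive solution $u(x,t)$ to \eqref{eq3} follow at once. The remaining, genuinely new content is the \emph{uniform} convergence of the whole flow to the constant $\beta:=C_0^{1/(p+q)}$, which is the unique positive root of $A(x)s^{-p}-B(x)s^{q}=B(x)s^{-p}(C_0-s^{p+q})=0$ and is independent of $x$.

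For the asymptotics I would use spatially constant, time-dependent barriers in the spirit of Propositions~\ref{pro1} and~\ref{pro2}. Write $f(x,s)=A(x)s^{-p}-B(x)s^{q}=B(x)s^{-p}(C_0-s^{p+q})$ and set $B_*:=\min_M B>0$. For a function $\phi(t)$ depending only on $t$ one has $\partial_t\phi-\Delta\phi-f(x,\phi)=\dot\phi-B(x)\phi^{-p}(C_0-\phi^{p+q})$. The key sign observation is that when $\phi\ge\beta$ the factor $C_0-\phi^{p+q}$ is $\le0$, so $B(x)\phi^{-p}(C_0-\phi^{p+q})\le B_*\phi^{-p}(C_0-\phi^{p+q})$ for every $x$, while when $\phi\le\beta$ the factor is $\ge0$ and the same inequality reverses. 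Accordingly I let $\overline\phi(t)$ solve the scalar ODE $\dot{\overline\phi}=B_*\overline\phi^{-p}(C_0-\overline\phi^{p+q})$ with $\overline\phi(0)=\max\{\beta,\max_M u_0\}\ge u_0$, and $\underline\phi(t)$ solve the same ODE with $\underline\phi(0)=\min\{\beta,\min_M u_0\}\le u_0$. Because $\beta$ is an equilibrium, $\overline\phi$ stays $\ge\beta$ and $\underline\phi$ stays $\le\beta$, so by the sign observation $\overline\phi$ is a supersolution and $\underline\phi$ a subsolution of \eqref{eq3}.

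By the parabolic comparison principle (the maximum-principle argument already used in Lemma~\ref{par-thm1}) this sandwiches the solution: $\underline\phi(t)\le u(x,t)\le\overline\phi(t)$ for all $x\in M$ and $t>0$. It then remains to analyze the autonomous scalar ODE $\dot\phi=B_*\phi^{-p}(C_0-\phi^{p+q})$, whose right-hand side is positive for $0<\phi<\beta$ and negative for $\phi>\beta$, so that $\beta$ is globally asymptotically stable and both $\overline\phi(t)$ and $\underline\phi(t)$ converge monotonically to $\beta$ as $t\to\infty$; squeezing yields $u(x,t)\to C_0^{1/(p+q)}$ uniformly in $M$. The main obstacle I anticipate is purely the spatial variation of $B(x)$: one must replace $B(x)$ by $B_*$ with the correct inequality direction according to the sign of $C_0-\phi^{p+q}$, which is exactly why the barriers must be pinned at $\beta$ rather than at $\max_M u_0$ or $\min_M u_0$. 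Once this sign bookkeeping is carried out, the comparison and the ODE analysis are routine. Alternatively, the same conclusion follows by applying Hamilton's trick directly to $u_{\min}(t)$ and $u_{\max}(t)$ and comparing each with the above ODE, mirroring the proofs of Propositions~\ref{pro1}--\ref{pro2}.
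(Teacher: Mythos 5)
Your proposal is correct, and it fills in a proof that the paper actually omits: the paper's entire ``proof'' of Corollary \ref{cor1} is the remark that it is similar to Propositions \ref{pro1} and \ref{pro2} and may be skipped. Those propositions treat the special case $A=B=1$ by working directly with $u_{\min}(t)$ and $u_{\max}(t)$ (Hamilton's trick) and deriving differential inequalities for them; your closing sentence correctly identifies that as the route the authors have in mind. Your primary argument is a genuine variant: instead of evolving the extrema, you build explicit spatially constant barriers $\overline\phi(t)$, $\underline\phi(t)$ solving the scalar ODE $\dot\phi=B_*\phi^{-p}\left(C_0-\phi^{p+q}\right)$ with $B_*=\min_M B$, pinned at the equilibrium $\beta=C_0^{1/(p+q)}$, and squeeze $u$ between them via the comparison principle already available from the maximum-principle computations in Lemma \ref{par-thm1}. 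The sign bookkeeping you carry out --- that replacing $B(x)$ by $B_*$ preserves the supersolution inequality only when $\overline\phi\geq\beta$, and the subsolution inequality only when $\underline\phi\leq\beta$, which is why the barriers must start at $\max\{\beta,\max_M u_0\}$ and $\min\{\beta,\min_M u_0\}$ rather than at the extrema of $u_0$ --- is exactly the new content that the corollary adds over Propositions \ref{pro1}--\ref{pro2} (where $B\equiv 1$) and that the paper's one-line proof glosses over. Your version also buys some rigor the paper's model proofs lack: the paper's assertion $u_{{\min}_t}=C(\delta)u_{\min}>0$ with the conclusion $u_{\min}(t)\to\infty$ is loosely stated (the minimum is bounded, and the generalized derivative only satisfies an inequality), whereas your monotone-convergence analysis of the autonomous ODE, whose only positive equilibrium is $\beta$, is airtight. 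One small point worth making explicit when you invoke the comparison principle: on each finite interval $[0,T]$ the smooth positive solution $u$ and the barriers lie in a compact subinterval of $(0,\infty)$, so the mean-value linearization of $f(x,u)=B(x)u^{-p}(C_0-u^{p+q})$ has bounded coefficient and the maximum principle applies; since the conclusion holds for every $T$, the sandwich holds globally. With that noted, the appeal to Theorem \ref{thm2} for existence and uniqueness (valid since $A>0$ and $A=C_0B$ force $B>0$ and $C_0>0$) and the final squeeze are complete.
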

\bigskip
\textbf{Proof of Corollary\;\ref{cor1}} This proof is similar to the
proofs of Propositions \ref{pro1} and \ref{pro2}, so we may omit it.

\section{Appendix}\label{sect4}
Consider the following PDE on a compact Riemannian manifold $(M,g)$
$$
-\Delta_gu+h(x)u=A(x)u^{-p}+B(x)u^q.
$$
Here $h(x$,$A(x)$, and $B(x)$ are smooth functions on $M$. Assume
that $A(x)>0$ and $B(x)<0$.

Hebey's result is below: one can get such a solution in a few lines
 with the sub and super solution method (at least if
 one assumes that $-\Delta_g+h$ is a positive operator).
 Assuming $-\Delta_g+h$ is a positive operator (e.g.
 $h > 0$) one lets $v > 0$ be arbitrary and $u > 0$
 be the solution of $-\Delta_gu+hu = v$. Then
 (1)
 $u_0 = \epsilon u$, $0 < \epsilon \ll 1$ is a
 subsolution for the equation, and

 (2) $u_1 = tu$, $t \gg 1$, is a supersolution for
 the equation.
 Noting that $u_0 < u_1$ one can apply the sub and super
 solution method and one gets a solution to the equation.

\textbf{Acknowledgement} The second author also wants to thank Liang
Cheng for helpful discussion.

\end{document}